\newtheorem{thm}{Theorem}[]
\newtheorem*{thm*}{Theorem}
\newtheorem{lem}[thm]{Lemma}
\newtheorem{ques}[thm]{Question}
\newtheorem{ex}[thm]{Example}
\newtheorem{rem}[thm]{Remark}
\newcommand{\param}{{\mathchoice{\mkern1mu\mbox{\raise2.2pt\hbox{$
\centerdot$}}
\mkern1mu}{\mkern1mu\mbox{\raise2.2pt\hbox{$\centerdot$}}\mkern1mu}{
\mkern1.5mu\centerdot\mkern1.5mu}{\mkern1.5mu\centerdot\mkern1.5mu}}}
\renewcommand{\setminus}{{\smallsetminus}}
\begin{document}
\title{Small genus, small index critical points of the systole function.}
\author{Ni An}
\author{Ferdinand Ihringer}
\author{Ingrid Irmer}



\begin{abstract}
In this paper the index of a family of critical points of the systole function on Teichm\"uller space is calculated. The members of this family are interesting in that their existence implies the existence of strata in the Thurston spine for which the systoles do not determine a basis for the homology of the surface. Previously, index calculations of critical points with this pathological feature were impossible, because the only known examples were in surfaces with huge genus.

A related concept is that of a ``minimal filling subset'' of the systoles at the critical point. Such minimal filling sets are studied, as they relate to the dimension of the Thurston spine near the critical point. We find an example of a minimal filling set of simple closed geodesics in genus 5 with cardinality 8, that are presumably realised as systoles.
More generally, we determine the smallest and largest cardinality of a minimal filling set related to a tessellation
of a hyperbolic surface by regular, right-angled $m$-gons for $m \in \{ 5, 6, 7 \}$.
For this, we use integer linear programming together with a hand-tailored symmetry breaking technique.
\end{abstract}

\maketitle

\section{Introduction}

The purpose of this paper is to study a family obtained from covering spaces of the examples studied in Example 36 of \cite{SchmutzMorse}. The members of this family, like those from \cite{SchmutzMorse}, are critical points of any mapping class group-equivariant Morse function on the Teichm\"uller space $\mathcal{T}_{g}$ of closed, compact, connected genus $g$ surfaces with $g\geq 2$. Calculating the index of these critical points is a problem related to the topology of the moduli space of closed surfaces, as will now be outlined.

The systoles on a hyperbolic surface $S_{g}$ of genus $g$ are the shortest closed geodesics. Every hyperbolic surface has at least one systole. The systole function $f_{\mathrm{sys}}:\mathcal{T}_{g}\rightarrow \mathbb{R}_{+}$ is the function that maps $x\in \mathcal{T}_{g}$ to the length of the systoles at $x$. It was shown in \cite{Akrout} that $f_{\mathrm{sys}}$ is a topological Morse function. Topological Morse functions are defined in \cite{Morse}, and can be used in the category of topological manifolds in much the same way as Morse functions on differentiable manifolds; a reference for this is Section 3 of Essay III of \cite{KS}. Many examples of critical points of $f_{\mathrm{sys}}$ are now known, for example \cite{CR,SchmutzMaxima,SchmutzExtremal}. The most famous examples are the Bolza surface in genus 2, \cite{Bolza}, and the Klein Quartic, \cite{Klein}. An estimate of the number of critical points of the systole function on $\mathcal{T}_{g}$ was given in \cite{YG}.

The Thurston spine, $\mathcal{P}_{g}$, is a CW complex embedded in the Teichm\"uller space, $\mathcal{T}_{g}$, of a genus $g$ surface, with $g\geq 2$. It consists of all the points at which the systoles cut the surface into polygons, i.e. the systoles \textit{fill} the surface. As explained in \cite{Thurston}, it follows from a theorem first proven by Bers \cite{Bers}, that $\mathcal{P}_{g}$ contains all the critical points of $f_{\mathrm{sys}}$. Moreover, $\mathcal{P}_{g}$ is the image of a mapping class group-equivariant deformation retraction of $\mathcal{T}_{g}$, \cite{Me2023,Thurston}. In \cite{SchmutzMorse}, Schmutz Schaller posed the question of whether there exist critical points of $f_{\mathrm{sys}}$ of co-index larger than the virtual cohomological dimension $4g-5$ of the mapping class group. This question is tied up with the question of whether the dimension of $\mathcal{P}_{g}$ is larger than the lower bound given by $4g-5$. As explained in Section 5 of \cite{MorseSmale}, it is a consequence of the work of Akrout that a critical point $p$ of $f_{\mathrm{sys}}$ of index $j$ determines a cell of $\mathcal{P}_{g}$ of codimension $j$, and that the cardinality of a minimal filling set contained in the set of systoles at $p$ gives an upper bound on the dimension of $\mathcal{P}_{g}$ on a neighbourhood of $p$.
\begin{figure}[!ht]
\centering
\label{resultstable}
\end{figure}

\begin{table}[htbp]
\centering
\begin{tabular}{@{}ccccccc@{}}
\toprule
$ m $ & genus & \multicolumn{1}{c}{number of systoles} & \multicolumn{1}{c}{smallest cardinality} & \multicolumn{1}{c}{largest cardinality} & \multicolumn{1}{c}{rank} & \multicolumn{1}{c}{index} \\
\midrule
5 & 5 & 20 & 8 & 10 & 10 & 12 \\
6 & 17 & 48 & 24 & 31 & 31 & 39 \\
7 & 49 & 112 & 64 & 80 & 84 & 102 \\
8 & 129 & 256 &  $\geq155$, $\leq160$ & $\geq190$, $\leq192$ & 210 & 245 \\
\bottomrule
\end{tabular}

\caption{The results of our computations.  Here ``smallest/largest cardinality'' refers to the smallest/largest cardinality of a minimal filling set. The ``rank'' is the dimension of the homological span of the systoles in $H_{1}(S_{g};\mathbb{Q})$.}

\label{tab:filling_sets}
\end{table}

While it is now known from a number of sources, \cite{Maxime,Coxeter}, that the answer to Schmutz Schaller's question is yes, the examples in this paper are the first such examples for which the index could actually be calculated. The results are given in Table \ref{resultstable} \footnote{Computational data files associated with Table \ref{resultstable} are available on the arXiv version of this article.}. Enough examples were calculated to guess the formula $m2^{m-3}-(m+3)$ for the index. The parameter $m=5,6,7, \ldots$ will be explained in Section \ref{secexamples}.

The first in our family of examples has genus 5. This is likely the smallest genus surface for which the dimension of the Thurston spine is larger than the virtual cohomological dimension of the mapping class group and for which there is a filling set of systoles that do not span $H_{1}(S_{g};\mathbb{Q})$. It is known that neither of these things happens in genus 2, \cite{Calculation} and it appears unlikely in genus 3 as a result of preliminary results due to Schmutz Schaller, \cite{SchmutzMaxima}. The second example in the family was mentioned in \cite{estimating} and has genus 17. Previously, only comparatively large genus examples of this phenomenon were known, making it difficult to study and obtain intuition.

Following Thurston, a \textit{stratum} labelled by $C$ is the set of points in $\mathcal{T}_{g}$ at which $C$ is the set of systoles.

A minimal filling set $C'$ of closed geodesics on $S_{g}$ is a filling set that does not contain any proper subsets that fill.  It was shown in \cite{MorseSmale} that for a minimal filling set $C'$, the codimension in $\mathcal{T}_{g}$ of a stratum of $\mathcal{P}_{g}$ on which the set of systoles is $C'$ is given by $|C'|-1$. Our examples have an unusual property that the set $C$ of systoles contains minimal filling subsets of different cardinalities. We computed enough examples of minimal filling subsets of smallest cardinality to guess the formula $(m-3)2^{m-3}$, as well as studying orbits of minimal filling subsets.

Another property satisfied by all of our minimal filling sets is that the number of complementary regions is equal to one. The cardinality of such filling sets was bounded from above by $2g$ in \cite{Fillingsystems}.

There is a property of systoles known as \textit{local finiteness}, explained in Section \ref{secexamples}. As a consequence of local finiteness, one expects that there is a minimal filling set $C'\subset C$ such that $C'$ is the set of systoles on a locally top-dimensional stratum of $\mathcal{P}_{g}$ adjacent to $p$. All known locally top-dimensional strata of $\mathcal{P}_{g}$ have systoles that are minimal filling sets. For the representatives in our family of examples with even parameter $m$, the result claimed in Theorem 2 of \cite{estimating} ensures that all locally top-dimensional strata adjacent to $p$ have systoles exactly equal to a minimal filling set of $C$ of smallest cardinality. The set $C$ of systoles in our example of a critical point in genus 5 has a minimal filling set of cardinality 8, shown in Figure \ref{8systoles}. This strongly suggests that in genus 5, the dimension of the Thurston spine is larger than the virtual cohomological dimension of the mapping class group.

Lemma 9 of \cite{SchmutzMorse} implies that the index of a critical point is always bounded from below by one less than the largest cardinality of a minimal filling subset of the set of systoles. In the family of examples from Example 36 of \cite{SchmutzMorse}, the index of the critical point is equal to this lower bound. In our examples, all minimal filling sets of systoles have cardinality significantly less than the index.

It was shown in \cite{MorseSmale} that $\mathcal{P}_{g}$ contains a set of unstable manifolds of the critical points of $f_{\mathrm{sys}}$\footnote{As the systole function is only a topological Morse function, these are not necessarily uniquely defined as for smooth Morse functions}. In genus 2, $\mathcal{P}_{g}$ is exactly a set of unstable manifolds of the critical points of $f_{\mathrm{sys}}$. Calculations on our first few elements in the family of examples show that for these critical points, $\mathcal{P}_{g}$ strictly contains the unstable manifolds.



\textbf{Outline of the paper.} Section \ref{secexamples} constructs the examples, derives some of their basic properties and explains the combinatorial characterisation of the systoles. Section \ref{secindex} explains the algorithms for finding minimal filling sets and computing the index. Related open questions are discussed in Section \ref{future}.


\section{The Examples}
\label{secexamples}
The purpose of this section is to introduce the examples studied, their basic properties and some of the concepts from the literature that will be used.

The examples arise from tessellations of a closed, connected, hyperbolic surface $S_{g}$ by right angled, regular $m$-gons ($m=5, 6, 7,\ldots$), with dual graph given by the 1-skeleton $C^{m}$ of the $m$-dimensional cube. It follows from Euler characteristic arguments that the genus of the surface is given by 
\begin{equation}
g=1+(m-4)2^{m-3}
\end{equation}

Denote by $C$ the set of closed geodesics on $S_{g}$ contained in the edges of the right-angled $m$-gons. These will be shown to be the systoles. From the parameterisation of the geodesics in $C$ given in Subsection \ref{subsecparameterisation}, it follows that each geodesic in $C$ traverses exactly 4 edges of the tessellation, and intersects exactly 4 other geodesics in $C$.

The length of a geodesic $c\in C$ is a function $L(c):\mathcal{T}_{g}\rightarrow \mathbb{R}_{+}$. A \textit{length function} generalises the notion of the length of a closed geodesic; it is a function $\mathcal{T}_{g}\rightarrow \mathbb{R}_{+}$ that can be written as a finite positive linear combination of lengths of closed geodesics.

The automorphism group of the surface with this tessellation can be deduced from the isomorphism group of the $m$-dimensional cube, as explained in Subsection \ref{subsecparameterisation}. It acts transitively on the set $C$. The automorphism group is also isomorphic to a quotient of a triangle group $\Delta(\frac{1}{2m}, \frac{1}{4}, \frac{1}{2})$ ; each $m$-gon is made up of $2m$ triangles with angles $\frac{\pi}{2m}$, $\frac{\pi}{4}$, $\frac{\pi}{2}$ as shown in Figure \ref{ngontriangulation}. Examples with automorphism groups given by quotients of triangle groups have been intensely studied, for example \cite{Modular,Number}. This is partly because as shown in Theorem 37 of \cite{SchmutzMorse}, the large symmetry groups force such examples to be critical points of all mapping class group-equivariant Morse functions on $\mathcal{T}_{g}$.

\begin{figure}[!ht]
\centering
\includegraphics[width=0.3\textwidth]{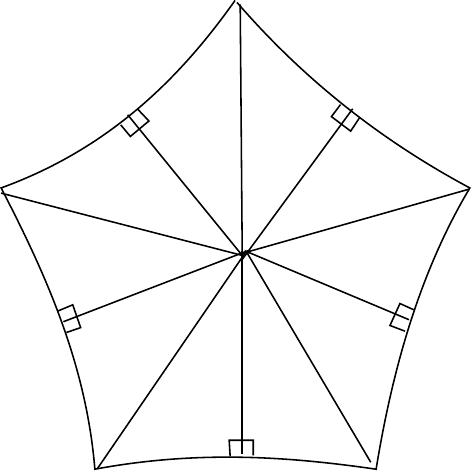}
\caption{Each $m$-gon is triangulated by $2m$ triangles, as shown here in the case $m=5$.}
\label{ngontriangulation}
\end{figure}

Throughout this paper, geodesics are assumed to be simple and closed, and a capital $C$ will be used to denote a set of closed, simple geodesics intersecting pairwise at most once. These are necessary properties for a set of geodesics to be realisable somewhere on $\mathcal{T}_{g}$ as the set of systoles.

Another important property of sets of systoles is \textit{local finiteness}. For given $\epsilon>0$, there is a finite number of geodesics of length less than $f_{\mathrm{sys}}(x)+\epsilon$, \cite{M}, so there exists a nonzero gap between the length of the systoles and the length of the next shortest geodesics; call this gap $g_{sys}(x)$. Suppose $x\in \mathcal{T}_{g}$ is a point at which the systoles are given by the set $C$. It is therefore necessary to deform the marked hyperbolic metric by a nontrivial amount in order to change the length of a curve by at least the amount $g_{sys}(x)$ needed to reach a point $y\in \mathcal{T}_{g}$ at which there is a systole not in the set $C$. In other words, there is a neighbourhood of $x$ in which the systoles are contained in the set $C$.

The number of systoles at any point in $\mathcal{T}_{g}$ is bounded from above by a constant depending on $g$ known as the \textit{kissing number}. This is studied for example in \cite{Kiss} and \cite{SchmutzExtremal}.

\begin{thm}
\label{systolesandtesselation}
In our family of examples, the systoles consist of the set $C$ of closed geodesics contained in the edges of the right-angled $m$-gons.
\end{thm}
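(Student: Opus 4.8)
The plan is to compute the common length of the geodesics in $C$ and then show that every closed geodesic not in $C$ is strictly longer, so that the minimum of the length spectrum is attained precisely on $C$. First I would note, from the triangulation of Figure~\ref{ngontriangulation} and elementary hyperbolic trigonometry, that the common edge length $\ell$ of the right-angled regular $m$-gons satisfies $\cosh(\ell/2)=\sqrt{2}\cos(\pi/m)$; since by the parameterisation of Subsection~\ref{subsecparameterisation} each $c\in C$ runs along exactly four edges, every $c\in C$ has length $L(c)=4\ell=:L_0$ (consistent with the transitive automorphism action on $C$). The same parameterisation shows the members of $C$ are simple, pairwise meet at most once, and have union the $1$-skeleton $\Sigma$ of the tessellation, whose complementary regions are the open $m$-gons, so $C$ fills $S_g$ and is a genuine candidate for a set of systoles. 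It remains to show: if $\gamma$ is a closed geodesic with $\gamma\notin C$, then $L(\gamma)>L_0$.

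Split according to how $\gamma$ meets $\Sigma$. If $\gamma\subseteq\Sigma$, then at every vertex (which is $4$-valent with edges at equal angles $\pi/2$, hence a regular point since $4\cdot(\pi/2)=2\pi$) a geodesic must continue along the opposite edge; iterating from an arbitrary directed edge, the parameterisation shows the path closes up after four edges and is simple, i.e.\ $\gamma\in C$, contrary to hypothesis. So assume $\gamma$ meets $\Sigma$ transversally. Then $\Sigma$ cuts $\gamma$ into geodesic arcs $\alpha_1,\dots,\alpha_k$, with $\alpha_i$ in an $m$-gon $P_i$, crossing its edges $e_{i-1}$ and $e_i$; the cyclic sequence $P_1,\dots,P_k$ is a closed edge-walk in the dual cube graph $C^m$, homotopic to $\gamma$ and hence essential. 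Since a transverse arc cannot have both endpoints on one geodesic edge, $e_{i-1}\neq e_i$, so the walk is non-backtracking; as $C^m$ is bipartite of girth $4$, $k$ is even and $k\geq 4$.

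Now the length estimates. Call $\alpha_i$ \emph{long} if $e_{i-1}$ and $e_i$ are non-adjacent in $P_i$; then $|\alpha_i|\geq d(e_{i-1},e_i)\geq\ell$ — for ``two apart'' edges the distance is realised by the intervening edge, a common perpendicular whose feet are polygon vertices, so equality fails for a transverse arc and $|\alpha_i|>\ell$. Otherwise $\alpha_i$ is a \emph{corner cut}, joining adjacent edges of $P_i$ at their common vertex $v_i$; such arcs can be arbitrarily short, but are constrained by the fact that \emph{no two consecutive arcs can be corner cuts at the same vertex}: otherwise a lift $\tilde\gamma\subset\mathbb{H}^2$ would cross three lifted edges issuing from one lifted vertex $\tilde v$, at angles $0,\pi/2,\pi$, whereas the rays from $\tilde v$ meeting the geodesic line $\tilde\gamma$ fill an angular arc of measure less than $\pi$. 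Consequently, for two consecutive corner cuts $\alpha_{i-1},\alpha_i$ the vertices $v_{i-1},v_i$ are the two distinct endpoints of $e_{i-1}$, and a short computation (the triangle inequality together with $d(v_{i-1},e_i)=\ell$, since $e_{i-1}\perp e_i$ at $v_i$, and the right-angled triangle relation $\cosh|\alpha_{i-1}|=\cosh(d(v_{i-1},p_{i-2}))\cosh(d(v_{i-1},p_{i-1}))$) gives $|\alpha_{i-1}|+|\alpha_i|\geq\ell$; in particular a geodesic cannot encircle a regular point. Feeding these bounds into an accounting of $L(\gamma)=\sum_i|\alpha_i|$ over the long arcs and the maximal runs of corner cuts should force $L(\gamma)>4\ell=L_0$: when $k=4$ the walk is a single square face of $C^m$, all of whose transitions have the same type, so either $\gamma$ has four long arcs or it follows a square face all of whose transitions are ``adjacent'', which the parameterisation identifies with the star of a vertex of $S_g$ — a null-homotopic loop, impossible for a geodesic.

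The main obstacle is exactly this last accounting for $k\geq 6$. Corner-cutting arcs are individually as short as one pleases, and one must show that, globally, a closed geodesic following a long walk in $C^m$ with many short corner cuts is nevertheless forced to exceed $4\ell$ — uniformly in $m$. The honest ingredients are the hyperbolic lemma lower-bounding arcs between non-adjacent edges, the ``no two consecutive corner cuts at a vertex'' constraint together with its quantitative companion $|\alpha_{i-1}|+|\alpha_i|\geq\ell$, and the combinatorics of non-backtracking closed walks in $C^m$ supplied by the parameterisation of Subsection~\ref{subsecparameterisation} (which must exclude the ``cheap'' walks with too few long arcs); turning these into the sharp strict inequality $L(\gamma)>4\ell$ is the delicate part, and is where the explicit parameterisation is used most essentially.
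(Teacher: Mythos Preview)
Your strategy is the same as the paper's: cut a candidate geodesic by the tessellation, distinguish ``corner'' arcs from the others, use that non-corner arcs have length strictly greater than the edge-length $\ell$, and combine with the fact that the dual walk in $C^m$ has combinatorial length at least $4$. Your treatment of $\gamma\subseteq\Sigma$, your non-backtracking/bipartite observation giving $k\ge4$, and your ``no two consecutive corner cuts at the same vertex'' are all present in the paper (the last appears there as the remark that consecutive corner arcs cannot bound a bigon with an edge of the tessellation). Your $k=4$ dichotomy is also correct once one invokes Lemma~\ref{squaresonsurface}: in the $2$-face of the cube determined by coordinate directions $i,j$, the two edges of each $m$-gon involved are its $i$th and $j$th edges, which are adjacent iff $j\equiv i\pm1\bmod m$, i.e.\ iff the square is realised; so a length-$4$ walk yields either four non-corner arcs (hence length $>4\ell$) or a contractible loop around a vertex.

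The gap you flag for $k\ge6$ is real, and the paper closes it with an ingredient you are missing. For each corner arc $a_j$ (cutting the vertex $v$ where $e_{j-1},e_j$ meet), the paper adds to $C^m$ the diagonal of the realised square at $v$ joining $P_{j-1}$ to $P_{j+1}$, obtaining an enlarged graph $\tilde{C}^m$. The key point is that adding such diagonals does \emph{not} reduce the minimum combinatorial length of a homotopically nontrivial loop below $4$: each diagonal replaces a path of length $2$ inside a realised (hence contractible) square, so the only new short cycles created are contractible $3$-cycles. Since the corner arc $a_j$ lets the dual walk be shortened by one step in $\tilde{C}^m$, the presence of any corner arc forces $k>4$, and more generally forces enough extra arcs to be present to carry the length accounting. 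Together with the strict estimate that a consecutive pair containing a corner arc has total length $>\ell$ (your estimate should be strict, not $\ge\ell$; equality would fail to exclude, e.g., eight consecutive corner cuts), this yields the contradiction. In short: the missing idea is to track combinatorial length in $\tilde{C}^m$ rather than in $C^m$, which is exactly what converts your informal ``corner cuts cost length somewhere'' into a usable lower bound.
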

\begin{proof}
This is a generalisation of the argument given in the proof of Theorem 36 of \cite{SchmutzMorse}. Let $l_{s}$ be the side length of an $m$-gon. The lengths of the geodesics in $C$ all have length $4l_{s}$. The dual graphs $C^{m}$, $m=5, 6, \ldots$ in the construction of the examples have the property that the shortest closed loops all have length 4. The theorem is proven by assuming the existence of a geodesic $c'\notin C$ on the surface with length at most $4l_{s}$ and deriving a contradiction.

Denote by $A:=\{a_{1}, \ldots, a_{k}\}$ the set of arcs formed by intersecting the geodesic $c'$ with the $m$-gons of the tessellation. The index on the arcs in $A$ is assumed to correspond to the cyclic ordering of the arcs along $c'$.

Arcs in $A$ with endpoints on neighbouring edges of the $m$-gon in which they are contained will be called \textit{corner arcs}. All other arcs in $A$ will be called non-corner arcs. When a non-corner arc $a$ has endpoints on a pair of edges separated by a single edge of the $m$-gon, the length of $a$ is more than $l_{s}$ by Theorem 3.5.7 of \cite{Ratcliffe}. When the endpoints of a non-corner arc are on a pair of edges separated by more than one edge of the $m$-gon, multiple applications of Theorem 3.5.7 of \cite{Ratcliffe} show that the length of $a$ is greater than $l_{s}$.

The geodesic $c'$ is homotopic to a closed loop in the embedding of the graph $C^{m}$ in $S_{g}$. One choice of homotopy takes every arc $a_{i}$ in $A$ to the union of 2-half edges of the embedding of $C^{m}$ in $S_{g}$ with endpoints on the same edges of the $m$-gon containing $a_{i}$. If $c'$ passes through a vertex $v$ of one of the $m$-gons, a choice is made, and a corner arc is added to the set $A$. The existence of a corner arc in $A$ implies the existence of a $4m-8$-gon $G_{i}^{4m-8}$ obtained by gluing the four $m$-gons incident on $v$ along common edges. The polygon $G_{i}^{4m-8}$ has the property that a connected component of $c'\cap G_{i}^{4m-8}$ traverses at least three arcs in $A$, two possible examples of which are shown in red as $a_{j}$ and $a_{j-1}$ in Figure \ref{cornerarcs}. For each corner arc $a$ of $A$, add an edge to $C^{m}$ connecting the pair of diagonally opposite vertices of the square in $C^{m}$ in the center of the $m$-gons containing endpoints of $a$, as shown by the dotted line in Figure \ref{cornerarcs}. The graph $C^{m}$ with a diagonal edge added for each corner arc of $A$ will be called $C^{m}_{d}$.

The number $k$ of arcs in $A$ satisfies $k\geq 4$. This is because the shortest closed loop in $C^{m}$ is 4, so if $k<4$, $c'$ would be contained in a contractible subsurface, and therefore homotopically trivial. For a closed geodesic $c$ on $S_{g}$, the combinatorial length of $c$ is defined to be the smallest number of edges of  $C^{m}_{d}$ traversed by a curve homotopic to $c$. Adding the diagonal edges to $C^{m}$ to obtain $C^{m}_{d}$ does not change the minimum combinatorial length of a homotopically nontrivial curve, which remains 4.

If $A$ contained at least 4 non-corner arcs, the length of $c'$ would be greater than that of the systoles. It follows that if $c'$ is a systole, $A$ must contain corner arcs.

\begin{figure}[!ht]
\centering
\includegraphics[width=0.4\textwidth]{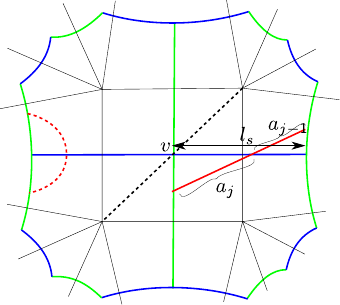}
\caption{Four right angled hexagons (not drawn to scale, with alternating purple and blue edges) of side length $l_{s}$ in the tessellation. The dual $C^{6}$ is shown in grey, and a couple of corner arcs $a_{j}$ and $a_{j-1}$ are shown in red.}
\label{cornerarcs}
\end{figure}

The existence of corner arcs forces $k>4$, because the combinatorial length of $c$ must be at least 4. As shown in Figure \ref{cornerarcs}, if a corner arc $a_{j}$ is preceded by another corner arc $a_{j-1}$ the length of $a_{j-1}$ and $a_{j}$ together is more than $l_{s}$. Note that geodesics are in minimal position, so a pair of consecutive corner arcs cannot make up one side of a bigon, as shown by the dotted red line in Figure \ref{cornerarcs}. Similarly if $a_{j+1}$ is a corner arc, the length of $a_{j}$ and $a_{j+1}$ is more than $l_{s}$. If the arcs on both sides of $a_{j}$ are non-corner arcs, then the lengths of $a_{j-1}$ and $a_{j}$ are greater than $l_{s}$. Since the complement of this pair of arcs in $A$ must have at least cardinality 3 in order to give a curve with combinatorial length at least 4, this gives the required contradiction to the existence of $c'$.
\end{proof}




\subsection{Parameterising the systoles}
\label{subsecparameterisation}
This subsection explains the combinatorial description of the systoles in our family of examples.

Vertices of $C^{m}$ are represented by $m$-tuples with entries in $\mathbb{Z}_{2}$. Edges of $C^{m}$ are determined by the pair of vertices that make up their endpoints. Edges connect vertices given by tuples with all entries except one equal. A square in the $m$-dimensional cube is represented by 4 vertices. These 4 vertices have the property that all entries except 2 are equal. For example, there is a square in $C^{6}$ given by the vertices
 \begin{equation*}
(0,0,0,0,0,0), \ 
(0,0,0,1,0,0), \ 
(0,0,1,0,0,0), \ 
(0,0,1,1,0,0)
\end{equation*}

Note that all but two of the coordinates of the vertices (in the example above, the third and fourth) of a square are the same. The square given above will be represented by $\{(3,4), (0,0,0,0)\}$. More generally, a square in the $m$-dimensional cube will be denoted by a set consisting of a 2-tuple $(i,j)$ with $i\mod m< j\mod m$, $i,j\in \{1, \ldots, m\}$ and an $m-2$-tuple $(a_{1}, \ldots, a_{m-2})$ with entries in $\mathbb{Z}_{2}$.

As shown in Figure \ref{whichsquares}, some of the squares have a boundary that is mapped to a homotopically nontrivial loop on the surface, while others contain a vertex at which four $m$-gons of the tessellation come together and are realised as squares on the surface. A square that is realised as a square on the tessellated surface will be called a \textit{realised square}. The next lemma gives a combinatorial recipe for distinguishing the two types of squares.

\begin{figure}[!ht]
\centering
\includegraphics[width=0.6\textwidth]{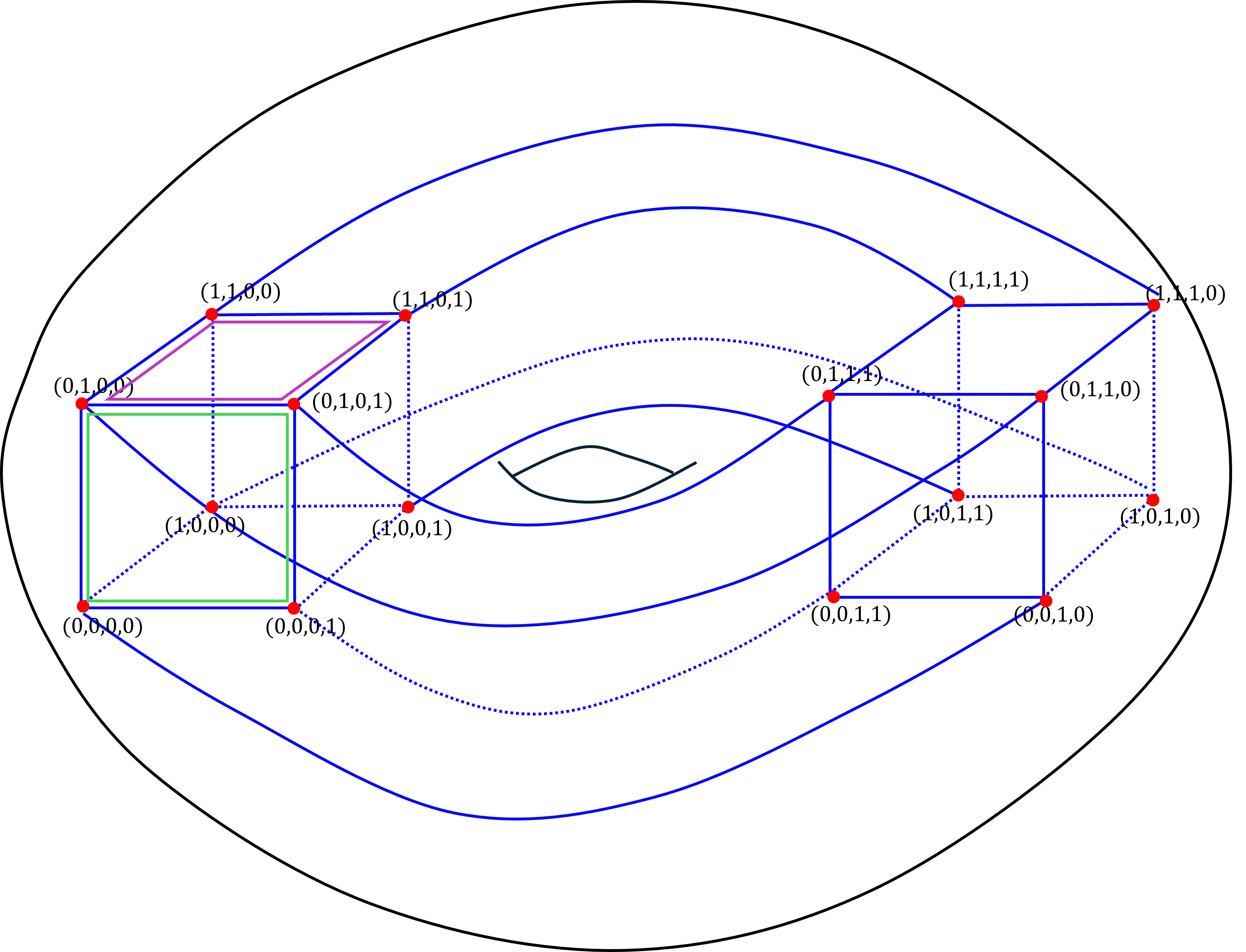}
\caption{The loop in green is the boundary of a square in the 4-cube that is mapped to a homotopically nontrivial loop on the torus, whereas the loop in purple is the boundary of a square in the 4-cube that is mapped to a square in the torus.}
\label{whichsquares}
\end{figure}

\begin{figure}
\centering
\includegraphics[height=0.95\textheight]{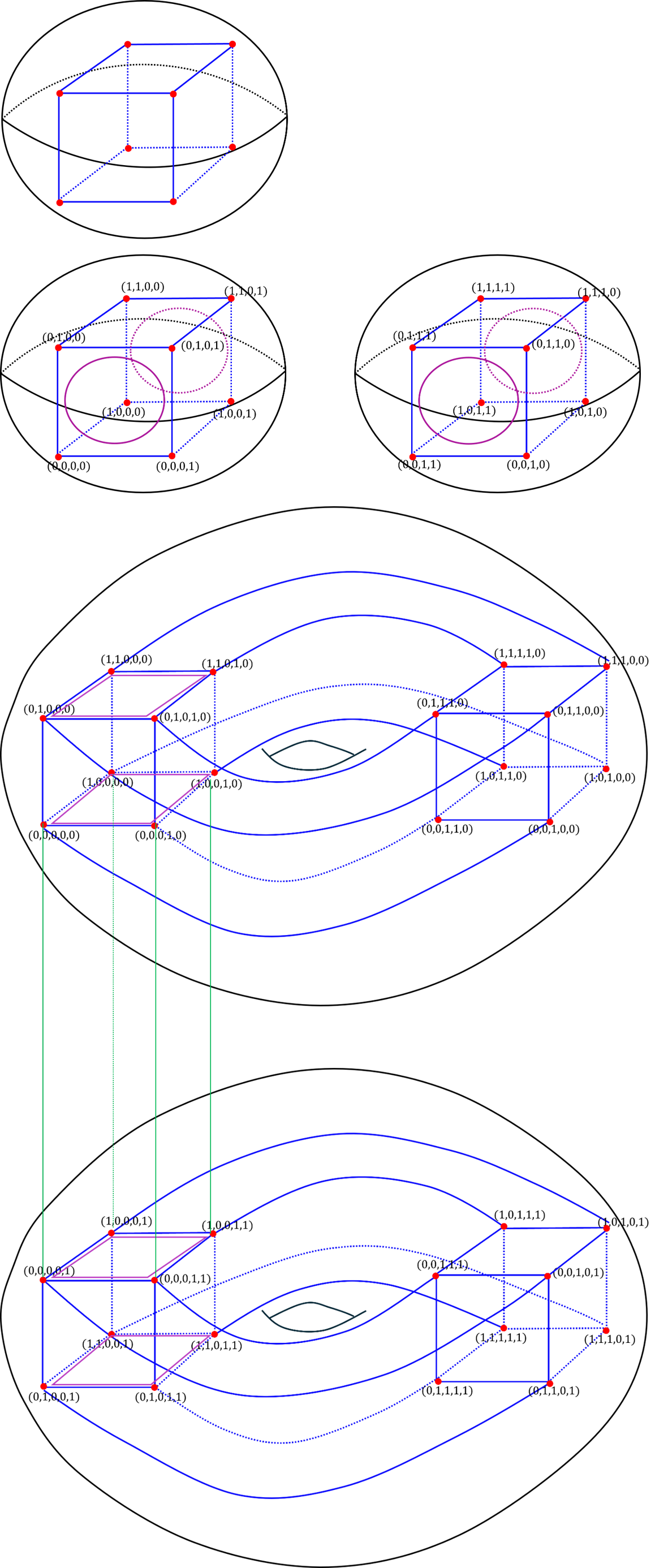}
\caption{}
\label{inductionfig}
\end{figure}

\begin{lem}
\label{squaresonsurface}
The ordering of the $m$-tuples that describe the coordinates of vertices can be chosen such that a square is a realised square iff it is of the form $\{(i,j), (a_{1}, \ldots, a_{m-2})\}$ with $j=i+1 \mod m$. 
\end{lem}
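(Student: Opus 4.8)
The plan is to work throughout with the combinatorial model of the tessellation in which each $m$-gon $P_v$ is indexed by a vertex $v\in\mathbb Z_2^m$ of $C^m$, and the side of $P_v$ that is glued to the neighbouring polygon $P_{v+e_k}$ is labelled by the coordinate $k\in\{1,\dots,m\}$ that gets flipped; thus every $P_v$ has its $m$ sides labelled bijectively by $\{1,\dots,m\}$, and a side of the surface carries the same label from both polygons meeting along it. In these terms there is a clean reformulation of ``realised square'': writing $v_0$ for the vertex of the square $\{(i,j),(a_1,\dots,a_{m-2})\}$ whose $i$- and $j$-coordinates are $0$, the four vertices of the square are $v_0,\,v_0+e_i,\,v_0+e_j,\,v_0+e_i+e_j$, and two of these polygons are glued along a side exactly when the corresponding cube-vertices are adjacent in $C^m$. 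Hence the square is realised, i.e. these four polygons bound a square region with a common central vertex $x$, if and only if the cyclic order of the four polygons around $x$ is the $4$-cycle $v_0,\,v_0+e_i,\,v_0+e_i+e_j,\,v_0+e_j$; in particular $P_{v_0}$'s two neighbours around $x$ are $P_{v_0+e_i}$ and $P_{v_0+e_j}$, so that sides $i$ and $j$ of $P_{v_0}$ both pass through $x$. Conversely, if sides $i$ and $j$ of $P_{v_0}$ are adjacent sides of that polygon they meet at a vertex $x$, and then $P_{v_0}$, $P_{v_0+e_i}$, $P_{v_0+e_j}$ all touch $x$ and the fourth polygon at $x$ is $P_{v_0+e_i+e_j}$ (see the parity computation below). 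So: \emph{the square $\{(i,j),(a)\}$ is realised if and only if sides $i$ and $j$ are adjacent sides of the polygon $P_{v_0}$.}

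The content of the lemma is therefore to control the cyclic order in which the labels $1,\dots,m$ appear around the boundary of a polygon. I would use that the tessellation is invariant under the translation subgroup $\mathbb Z_2^m\le\operatorname{Aut}(C^m)$ of the cube, realised as a group of automorphisms of the tessellated surface acting transitively on polygons (this is part of the symmetry group identified in Subsection~\ref{subsecparameterisation}). A translation $T_w\colon P_v\mapsto P_{v+w}$ sends the side of $P_v$ labelled $k$ to the side of $P_{v+w}$ labelled $k$, and it carries the tessellation homeomorphically to itself; hence the cyclic sequence of labels read around $\partial P_{v+w}$ is the image under $T_w$ of the cyclic sequence read around $\partial P_v$, i.e. the same sequence up to rotation (and possibly reversal, if $T_w$ reverses orientation). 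In particular the \emph{cyclic adjacency relation} among the labels is independent of the polygon; call the resulting cyclic order $\pi$ on $\{1,\dots,m\}$. Relabelling the coordinates of $\mathbb Z_2^m$ by a bijection taking $\pi$ to the standard cyclic order $(1,2,\dots,m)$, sides $i$ and $j$ of every polygon are then adjacent sides precisely when $j\equiv i\pm 1\pmod m$; combined with the italicised equivalence above this is exactly the statement of the lemma, since under the convention $i\bmod m<j\bmod m$ the condition $j\equiv i\pm1\pmod m$ is the condition $j=i+1\bmod m$.

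For the parity computation used in the first paragraph: suppose sides $i$ and $j$ of $P_{v_0}$ meet at $x$, with $j$ a cyclic neighbour of $i$, say $j=i+1$. The fourth polygon $Q$ at $x$ shares a side (through $x$) with $P_{v_0+e_i}$, of some label $p$, and a side (through $x$) with $P_{v_0+e_j}$, of some label $q$, so $Q=P_{v_0+e_i+e_p}=P_{v_0+e_j+e_q}$. Because the labels around any polygon are cyclically $1,\dots,m$, $p$ is a cyclic neighbour of $i$ and $q$ a cyclic neighbour of $j$, so $p\in\{i-1,\,j\}$ and $q\in\{i,\,j+1\}$; the relation $e_i+e_p=e_j+e_q$ in $\mathbb Z_2^m$ then forces $p=j$, $q=i$ (the option $p=i-1$ would give $e_q=e_{i-1}+e_i+e_{i+1}$, a sum of three distinct basis vectors, impossible for $m\ge 5$), whence $Q=P_{v_0+e_i+e_j}$ as claimed. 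As a sanity check, realised squares correspond bijectively to vertices of the tessellation, of which there are $m\,2^{m-2}$, and the squares with $j=i+1\bmod m$ number $m$ cyclic pairs times $2^{m-2}$ choices of $(a)$, again $m\,2^{m-2}$.

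The step I expect to be the genuine obstacle is the middle paragraph: establishing that $1,\dots,m$ really do occur in one and the same cyclic order around every polygon, i.e. that the construction is homogeneous enough for the translation subgroup of $\operatorname{Aut}(C^m)$ to act on the tessellated surface. Everything else is bookkeeping in $\mathbb Z_2^m$ together with the local picture of four right-angled polygons meeting at a vertex. If one prefers not to invoke the global symmetry, the same consistency can instead be obtained by induction on $m$, building $C^{m+1}$ from two copies of $C^m$ joined by a perfect matching and tracking the position at which the new coordinate $m+1$ is inserted into the cyclic order; this is presumably the approach indicated by Figure~\ref{inductionfig}.
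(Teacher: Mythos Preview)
Your reformulation is clean and genuinely different from the paper's argument. The paper proves the lemma by an explicit inductive construction: it builds the $C^{m+1}$ tessellation from two copies of the $C^m$ tessellation (adding an edge to each polygon, cutting holes, and gluing in annuli), and observes that this operation preserves the statement at each stage, starting from the octahedral $m=3$ picture. Your approach instead reduces the lemma to two assertions: (i) the square $\{(i,j),(a)\}$ is realised iff sides $i$ and $j$ are adjacent in the polygon $P_{v_0}$, and (ii) the cyclic order of the labels $1,\dots,m$ around the boundary is the same for every polygon, so that after a single relabelling ``adjacent'' becomes ``$j\equiv i\pm 1$''. The advantage of your framing is that it explains \emph{why} the answer is a cyclic condition; the paper's induction verifies the statement but does not isolate this mechanism.

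Two points deserve tightening. First, your parity computation in the third paragraph invokes the conclusion of the second (``the labels around any polygon are cyclically $1,\dots,m$'') to constrain $p$ and $q$, yet it is cited in the first paragraph before that conclusion is available. The fix is easy: from $Q=P_{v_0+e_i+e_p}=P_{v_0+e_j+e_q}$ one gets $e_i+e_p=e_j+e_q$ in $\mathbb Z_2^m$; since $Q\neq P_{v_0}$ forces $p\neq i$ and $q\neq j$, both sides have exactly two nonzero coordinates, whence $\{i,p\}=\{j,q\}$ and $(p,q)=(j,i)$. No cyclic-order information is needed.

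Second, and more substantively, your appeal to the $\mathbb Z_2^m$ translation symmetry is circular as the paper is organised: the paper characterises the automorphism group of the tessellated surface as the subgroup of $\operatorname{Aut}(C^m)$ preserving realised squares, so one cannot use it to determine which squares are realised. What you actually need is that the image of the surface's automorphism group in $\operatorname{Aut}(C^m)\cong\mathbb Z_2^m\rtimes S_m$ contains the translation subgroup $\mathbb Z_2^m$; vertex-transitivity alone (which does follow from the triangle-group description given before the lemma) is not enough, since $\mathbb Z_2^m\rtimes S_m$ has vertex-transitive subgroups not containing $\mathbb Z_2^m$. The local consistency you derive, namely that the neighbours of label $k$ agree in $P_v$ and $P_{v+e_k}$, does \emph{not} by itself propagate to the remaining labels, so the uniform-cyclic-order claim really needs an additional argument. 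Your final sentence is exactly right: the induction on $m$ tracking where the new coordinate is inserted is precisely the paper's proof, and is what closes this gap.
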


\begin{proof}
The proof is by induction. The lemma holds in the case $m=3$. This is a decomposition of the 2-sphere into 8 right-angled triangles, with dual graph the 1-skeleton of the 3-cube, as shown in Figure \ref{inductionfig}. The next simplest example has dual graph given by the 1-skeleton of the 4-cube. The 4-cube is obtained by taking the cartesian product of the 3-cube with an interval; the graph $C^{4}$ is obtained by taking two copies of $C^{3}$ and adding an edge between each pair of corresponding vertices. Take two copies of the decomposition of the 2-sphere by right-angled triangles. When the right-angled triangles in the decompositions of the 3-sphere are replaced by their dual realised squares, the extra edges give a pair of holes (shown in purple in Figure \ref{inductionfig}) on the 2-spheres. A 2-sphere with 2 disks cut out is a cylinder. Gluing the two cylinders together gives a torus, with a decomposition into squares with dual graph $C^{4}$. After performing this operation, the statement of the lemma still holds.

To obtain the next simplest example, take two copies of the torus with decomposition given by squares with dual graph $C^{4}$. Adding an extra edge to each of the squares of the decompositions to obtain right-angled pentagons creates four holes in each copy of the torus as shown in purple in Figure \ref{inductionfig}. For each of these holes, glue in an annulus, with one boundary component glued to a hole in one of the holed tori, and the other boundary component glued to the corresponding hole in the second holed torus. One such annulus is shown in green in Figure \ref{inductionfig}. This gives a surface of genus 5, decomposed into right-angled pentagons, with dual graph to the decomposition given by $C^{6}$. Once again, this operation preserves the statement of Lemma \ref{squaresonsurface}. All of our family of examples can be obtained by iterating this construction, each time preserving the statement of the lemma.
\end{proof}

From now on, the geodesics in the set $C$ from Theorem \ref{systolesandtesselation} will be called systoles, and an ordering will be assumed for which the statement in Lemma \ref{squaresonsurface} holds. 

\begin{rem}
\label{twosquares}
Lemma \ref{squaresonsurface} ensures that when the graph $C^{m}$ is mapped to the surface, each vertex is on the boundary of $m$ realised squares, and every edge is on the boundary of two realised squares.
\end{rem}

Using Remark \ref{twosquares}, every systole can be represented by four realised squares. The idea is best represented by an example.

\begin{ex}[A systole and the four squares it passes through]
\label{foursquares}
Consider the tessellated surface with dual graph $C^{5}$. The edge $e$ with endpoints given by $\{(0,0,0,0,0), (1,0,0,0,0)\}$ is on the boundary of the two realised squares $(\star,\star,0,0,0)$ and $(\star,0,0,0,\star)$. In $(\star,\star,0,0,0)$, the edge $e_{1}$ opposite $e$ has endpoints given by the vertices $\{(0,1,0,0,0,), (1,1,0,0,0)\}$ (the second entries of $\{(0,0,0,0,0), (1,0,0,0,0)\}$ are changed by 1). Similarly, in the square $(\star,0,0,0,\star)$, the edge $e_{2}$ opposite $e$ has endpoints given by $\{(0,0,0,0,1), (1,0,0,0,1)\}$.
The edge $e_{1}$ is contained in the boundary of the realised squares $(\star,\star,0,0,0)$ and $(\star,1,0,0,\star)$ and the edge $e_{2}$ in $(\star,\star, 0,0,1)$ and $(\star,0,0,0,\star)$. The four faces
\begin{equation*}
(\star,\star,0,0,0),\ \\
(\star,1,0,0,\star),\ \\
(\star,\star,0,0,1),\ \\
(\star,0,0,0,\star)
\end{equation*}
represent a systole. In all four squares, note that the third and fourth entries are the same (both 0), and the first coordinate is always $\star$. Consequently, the second star is always in position 2 or 5. These four squares are all contained in a 3-dimensional face of the 5-dimensional cube. This systole will be denoted by $\{1, (0,0)\}$. In the tessellated surface with dual graph $C^{5}$, it follows from Remark \ref{twosquares} that a systole can be uniquely determined by $\{i, (a_{1}, a_{2})\}$, where $i\in \{1, 2,\ldots, 5\}$ and $a_{1}, a_{2}\in \mathbb{Z}_{2}$.
\end{ex}

Generalising the construction from Example \ref{foursquares}, a systole will be written as $\{i, (a_{1}, \ldots, a_{k})\}$, where $i\in \{1, 2, \ldots, m\}$, $k=m-3$ and $a_{1}, a_{2}, \ldots, a_{k}\in \mathbb{Z}_{2}$. For the tessellated surface with dual graph $C^{m}$, this gives $2^{m-3}m$ systoles. Also generalising  the construction from Example \ref{foursquares}, one sees that every systole passes through four realised squares, and hence intersects four other systoles.


By Lemma \ref{squaresonsurface}, a necessary condition for the systoles $\{i, (a_{1}, \ldots, a_{k})\}$ and $\{j, (b_{1}, \ldots, b_{k})\}$ to intersect is that $i-j=\pm 1 \mod m$. If $j=i+1 \mod m$ then another necessary condition is that $(a_{2}, \ldots, a_{k})=(b_{1}, \ldots, b_{k-1})$. These two conditions are sufficient.

\textbf{Genus 5 example.} Recall that the example with $m=5$ gives a genus 5 surface. Our calculations show that the smallest minimal filling set contained in the set $C$ of systoles at the critical point has cardinality 8. One such minimal filling set is shown in Figure \ref{8systoles} and a fundamental domain is given in Figure \ref{fundaDomain}.


\begin{figure}[!htp]
\centering
\includegraphics[width=0.8\textwidth]{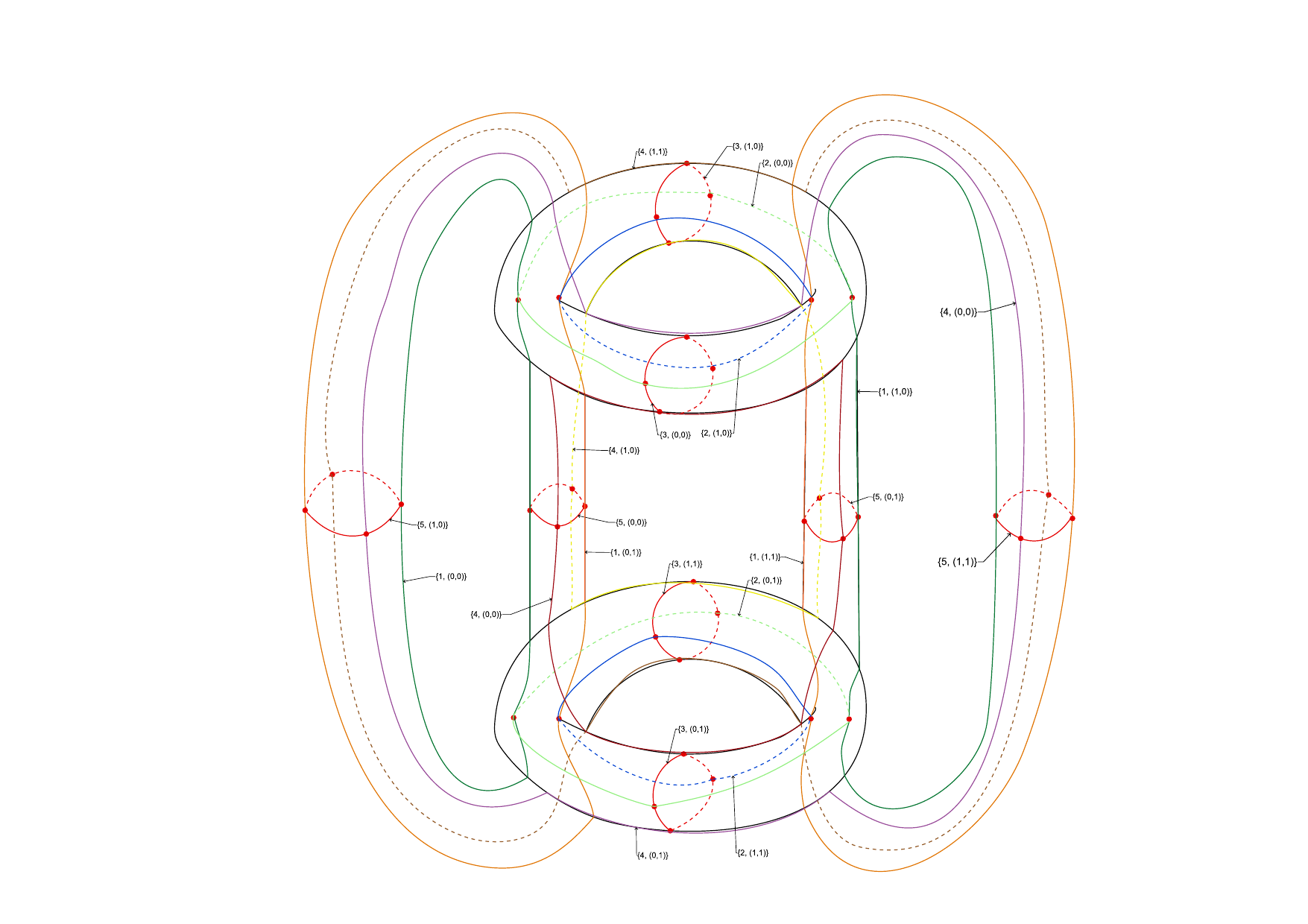}
\caption{All systoles in the genus 5 example.}
\label{Allsystoles}
\end{figure}

\begin{figure}[!htp]
\centering
\includegraphics[width=0.8\textwidth]{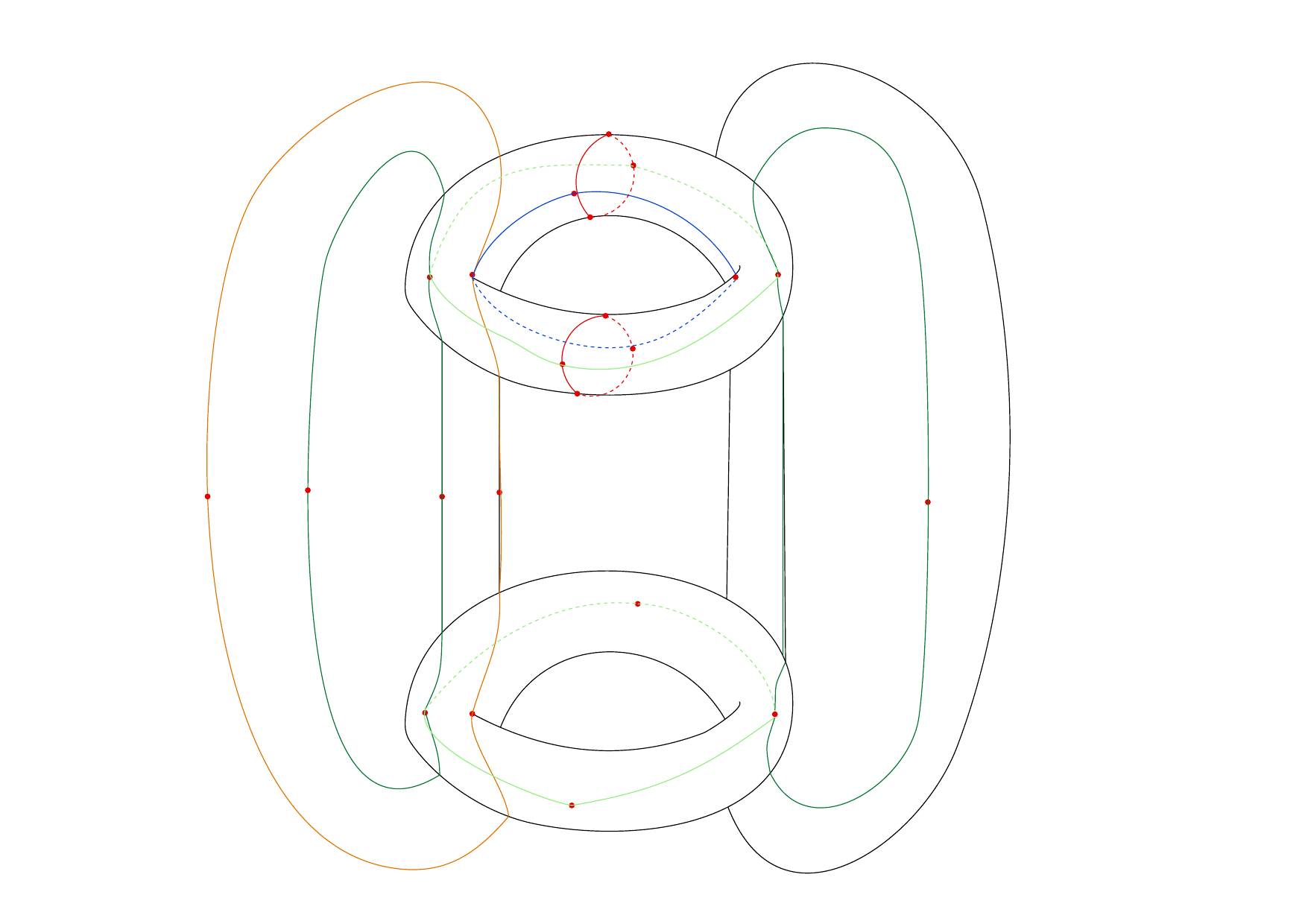}
\caption{A minimal filling set of 8 geodesics in the genus 5 surface.}
\label{8systoles}
\end{figure}

\textbf{Calculating the automorphism group.} The automorphism group of a tessellated surfaces in our family of examples with dual graph $C^{m}$ are isomorphic to the subgroup of the automorphism group of the graph $C^{m}$ that maps realisable squares to realisable squares. The following lemma will be assumed from now on without comment.

\begin{lem}
For every surface in our family of examples, the automorphism group acts transitively on the set of systoles.
\end{lem}
\begin{proof}
A systole is uniquely determined by an edge of the tessellation. Each such edge corresponds to an edge of the dual graph $C^{m}$. It therefore suffices to prove that the automorphism group acts transitively on the edges of $C^{m}$. 

First note that the automorphism of $C^{m}$ that changes one entry in the coordinate description of the vertices of $C^{m}$ leaves the set of realised squares invariant by Lemma \ref{squaresonsurface}. This corresponds to a reflection of $C^{m}$ through an $(m-1)$-dimensional cube parallel to a pair of opposite faces of $C^{m}$. Moreover, the automorphism that fixes a vertex $v$ of $C^{m}$ and permutes the edges emanating from $v$ in a way that respects the cyclic ordering of the edges also leaves the set of realised squares invariant by Lemma \ref{squaresonsurface}. This set of automorphisms act transitively on the edges of $C^{m}$.
\end{proof}

\section{Index Calculations}
\label{secindex}
This section describes the algorithm used for computing minimal filling sets and indexes of the critical points described in Section \ref{secexamples}.
\subsection{Finding Minimal Filling Sets}
\label{subminimal}

The integer linear program for finding minimal filling sets will now be explained.

Let $C=\{ c_{1}, \ldots, c_{n}\}$ be the set of all systoles, where $n=2^{m-3}m$.
Say that two systoles \textit{intersect} iff they pass through a common realised square.
For $i,j \in \{ 1, \ldots, n \}$, define $a_{i,j}$ as follows:
\[
a_{i,j}=
\begin{cases}
0 &\text{if }i=j, \\
1 & \text{if }i\neq j \text{ and }c_{i} \text{ and }c_{j}\text{ intersect},\\
0 & \text{if } i\neq j\text{ and } c_{i} \text{ and } c_{j}\text{ are disjoint}.
\end{cases}
\]

Recall that Lemma \ref{squaresonsurface} gave necessary and sufficient conditions for the systoles $\{i, (a_{1}, \ldots,  \allowbreak a_{k})\}$ and $\{j, (b_{1}, \ldots, b_{k})\}$ to intersect. Suppose $j=i+1 \mod m$, then the systoles intersect iff $(a_{2}, \ldots, a_{k})=(b_{1}, \ldots, b_{k-1})$.

To illustrate the general idea, let us first formulate the problem of finding minimal filling sets
as an integer quadratic program (IQP):
\begin{align*}
\text{Minimize} \quad & \sum_{i=1}^n x_i \\
\text{subject to} \quad & \sum_{i=1}^n \sum_{j=1}^i a_{i,j}x_ix_j \geq M(m), \\
& \sum_{j=1}^n a_{i,j}x_j \geq 1 \quad \forall i \in \{1,\ldots,n\},
\end{align*}
where $x_1, \ldots, x_n \in \{ 0, 1 \}$ and $M(m) = 1 - 2^{m-2}(4 - m)$ is a lower bound on the number of intersections derived from Euler characteristic arguments. The first constraint ensures a sufficient number of intersections, while the second constraint guarantees that every systole in the set intersects at least one other systole in $C$. Our solution $D$ corresponds to $D = \{ c_i\in C\ |\  x_i = 1 \}$.

Our goal is to replace the quadratic constraints by linear constraints.
For this aim we make use of the fact that systole pairs intersect at most once. In total there are $2n$ realised squares $\{ s_1, \ldots, s_{2n} \}$. We define the $2n\times n$ matrix $N$ with entries $n_{i,j}$ given by:
$$
n_{i,j} = 
\begin{cases} 
1 & \text{if } s_i \in c_j, \\
0 & \text{otherwise}.
\end{cases}
$$

Here $s_{i}\in c_{j}$ if $c_{i}$ is represented by a set of realisable squares containing $s_{i}$.

Introduce binary variables $y_1, y_2, \ldots, y_{2n}$, where $y_i=1$ if $s_i$ represents an intersection of $c_j $ and $c_k$
for some $j,k \in \{ 1, \ldots, n \}$ with $x_j = x_k = 1$ and $y_i = 0$ otherwise.
For convenience, we also introduce real variables $t_1, \ldots, t_n$ which count
the number of intersections for each systole in our solution $D$.
The above quadratic integer program is transformed into the following integer linear program (ILP):
\begin{equation}  
\label{linerPro2}
\begin{aligned}[b]  
\text{Minimize} \quad & \sum_{j=1}^n x_j  \\
\text{subject to} \quad & \sum_{i=1}^{2n} y_i \geq M(m),   \\
& t_j = \sum_{i=1}^{2n} n_{i,j}y_i \quad \forall j \in \{1,\ldots,n\}, \\
& x_j \leq t_j \leq 4x_j,   \\
& \sum_{j=1}^n a_{i,j}x_j \geq 1 \quad \forall i \in \{1,\ldots,n\}.
\end{aligned}
\end{equation}
Note that $t_j=0$ iff $x_j=0$ and $t_j \geq 1$ iff $x_j=1$.
Here $t_j \geq 1$ means that the $j$th systole is in the set.

The set of constraints given in the ILP above does not guarantee that the solutions are filling. For $m=5$ and $m=6$ this is the case. For $m = 5$, there is a minimal filling set of size $8$ unique up to isomorphism. For $ m = 6 $, there are  $6$ minimal filling sets of size 24 up to isomorphism.

In the $ m = 7 $ case, a lower bound of $62$ is found using ILP \eqref{linerPro2}. None of the solutions found were filling; this was a problem for all larger values of $m$ with which we worked. This made it necessary to add extra constraints to the ILP in \eqref{linerPro2} in order to obtain filling sets. Informally these constraints ensure the systoles intersect the boundary of the subsurfaces of $S_{g}$ filled by the solution set. The construction of these curves, denoted $C_{\mathrm{fill}}$, will now be explained.

\begin{figure}[!htp]
\centering
\includegraphics[width=0.8\textwidth]{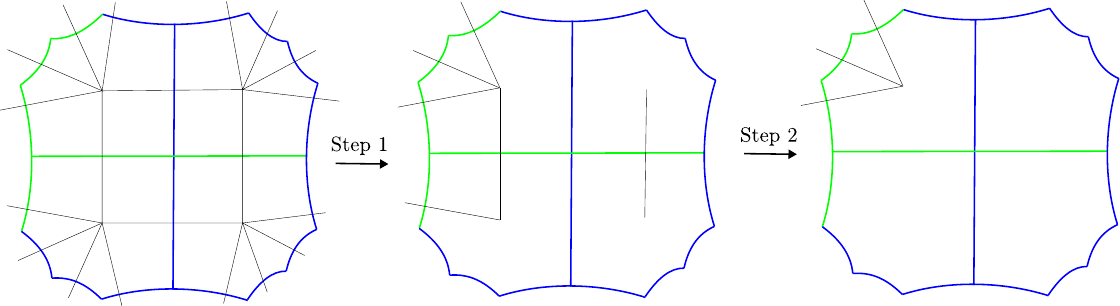}
\caption{A toy example illustrating the steps of the algorithm for constructing the simplified graph $\widetilde{C}^m$. Segments of systoles in the set $s$ are shown in blue, and segments of systoles not in the set are green. A portion of the graph $C^{m}$ is shown in black on the left. Note that any curve that does not intersect any of the blue curves must be in the subsurface containing $\tilde{C}^{m}$, a portion of which is shown in the figure on the right.}
\label{toyexample}
\end{figure}

Given a non-filling solution $s$ to the ILP, define the \textit{simplified graph} $\widetilde{C}^m$ to be the graph obtained as follows: 
\begin{enumerate}[label={Step \arabic*)}, leftmargin=*, align=left]
    \item Starting with $C^{m}$, delete every edge that intersects any curve in $s$.
    
    \item For every connected component of the graph from Step 1:
    \begin{itemize}[leftmargin=2em, label=\textbullet]
        \item Remove all vertices with valency $1$.
        \item If it exists, delete a vertex with valency $2$, that is,
        a vertex of a realizable square of the graph from the previous step.
    \end{itemize}
    
    \item Iterate Step 2 until this is no longer possible. If there is a vertex $v$ of valency greater than 3, and such that an edge $e$ incident on $v$ is on the boundary of only one realisable square in the graph, delete $e$.
\end{enumerate}

An edge $e$ in $\widetilde{C}^m$ is called a \textit{boundary edge} if there exists a realizable square in $C^m$ containing this edge, and the simplified graph $\widetilde{C}^m$ only contains 1 or 2 edges of this realisable square. The set $C_{\mathrm{fill}}$ consists of the generators of the fundamental group of the subgraph of $\widetilde{C}^m$ containing only boundary edges. The purpose of step 3 in the algorithm above is to ensure that these generators are all noncontractible in $S_{g}$.


\textbf{Symmetry breaking.} A combinatorial technique known as symmetry breaking will be used to restrict the ILP to a considerably smaller but suitably representative set.

For $C^7$, we use $\{i,(a_1,a_2,a_3,a_4)\}$ for $i=1,\ldots,7$, $a_k\in \{0,1\}$ to represent systoles. Let $C_i=\{i,(a_1,a_2,a_3,a_4)\ |\  a_k\in \{0,1\} \}$, $i=1,\ldots,7$, then $|C_i|=16$. Let $G$ be the automorphism group of our tessellated surface. Note that $G$ is transitive on $\{C_1,\ldots C_7\}$.

Let $C=\{ c_1,\ldots,c_{112}\}$ be the set of systoles for $C^7$, $x_i \in\{0,1\}$ where $x_{i}=1$ implies $c_{i}$ is in the solution set. We have computed that the smallest cardinality of minimal filling set is 62, 63 or 64.

Suppose the solution vector $\mathbf{x} = \{x_1, x_1, \ldots, x_{112}\}$ corresponds to a systole configuration that forms a minimal filling set with the smallest possible cardinality. Since there are 112 systoles in total, and the smallest cardinality of a minimal filling set is 62, 63 or 64, if $\mathbf{x} = \{x_1, x_1, \ldots, x_{112}\}$ corresponds to a minimal filling set with the smallest possible cardinality, then $|\{i\ |\  i=1,\ldots,112\text{ and }x_{i}=1 \}|=62, 63,$ or 64. This implies $|\{i\ |\ i=1,\ldots,112 \text{ and }x_{i}=0\}|=48,49, $ or 50. For the partition $C_1, C_2, \ldots, C_7$, there must exist some $C_i$ such that $|\{k \ |\  c_{k} \in C_i \text{ and } x_{k}=0\}| \leq 7$. By transitivity, we may assume $C_1$ satisfies this condition.

Denote by $S$ the set of all possible combinations of at most 7 out of 16 elements:
$$
|S| = \sum_{k=0}^7 \binom{16}{k}.
$$
Use $v = \{i_1, i_2, \ldots, i_{16}\}$ to represent an element in $S$, where $i_k \in \mathbb{Z}_{2}$. Each combination $v = \{i_1, i_2, \ldots, i_{16}\} \in S$ corresponds to an initial assignment method. Under the action of the automorphism group, $S$ decomposes into 923 distinct orbits $\mathcal{O}_1, \mathcal{O}_2, \ldots, \mathcal{O}_{923}$. For each orbit $\mathcal{O}_i$, select a representative $v_i \in \mathcal{O}_i$ and add the following constraints to the ILP \eqref{linerPro2} to obtain ILP \eqref{linerPro}:
\begin{equation}  
\label{linerPro}
\begin{aligned}[b]  
& x_j= i_j, \text{ for } j\in \{1,2,\ldots 16 \},\\
&\sum_{j=1}^n x_j=N.
\end{aligned}
\end{equation}

The two conditions guarantee that every solution set intersects the orbit of
$C_{\mathrm{fill}}$ under the stabiliser subgroup of $C_{1}$.
These ILPs were all shown to be infeasible\footnote{We used Gurobi 11.0.3 on a Intel Xeon Platinum 8268 with 50 threads. The calculations for all 955 ILPs took around 3 CPU weeks.} for $N=62, 63$, and all solutions found for $N=64$ are filling. We concluded that the smallest cardinality of the minimal filling set is 64.

\subsection{Calculating the Index}
\label{subsecindex}

This subsection explains the algorithm for calculating the index of critical points.

A set of closed geodesics $C$ will be said to be \textit{eutactic} at a point $x\in \mathcal{T}_{g}$ if every derivation $v$ in $T_{x}\mathcal{T}_{g}$ has the property that either $vL(c)(x)=0$ for all $c\in C$ or there exists a $c_{1}\in C$ for which $vL(c_{1})(x)>0$ and a $c_{2}\in C$ for which $vL(c_{2})(x)<0$.

In \cite{Akrout}, it was shown that $x\in \mathcal{T}_{g}$ is a critical point iff there is some set $C$ of geodesics such that $C$ is the set of systoles at $x$ and $C$ is eutatic at $x$. At a critical point $x$, it follows from local finiteness and convexity of length functions along Weil-Petersson geodesics \cite{WolpertHessian} that the orthogonal complement of $\{\nabla L(c)(x)\ |\ c\in C\}$ is a subspace of $T_{x}\mathcal{T}_{g}$ on which $f_{\mathrm{sys}}$ is increasing to second order. From eutacticity and local finiteness it follows that the span of $\{\nabla L(c)(x)\ |\ c\in C\}$ consists of the directions in which $f_{\mathrm{sys}}$ is decreasing at $x$. The index of the critical point is therefore the dimension of the span of $\{\nabla L(c)(x)\ |\ c\in C\}$.

As critical points represent hyperbolic surfaces that often have a larger symmetry group than the hyperbolic surfaces represented by points in $\mathcal{T}_{g}$ near the critical point, the dimension of the span of the gradients at nearby points does not give a good estimate of the dimension of the span at the critical point.

\begin{figure}
\centering
\includegraphics[width=0.8\textwidth]{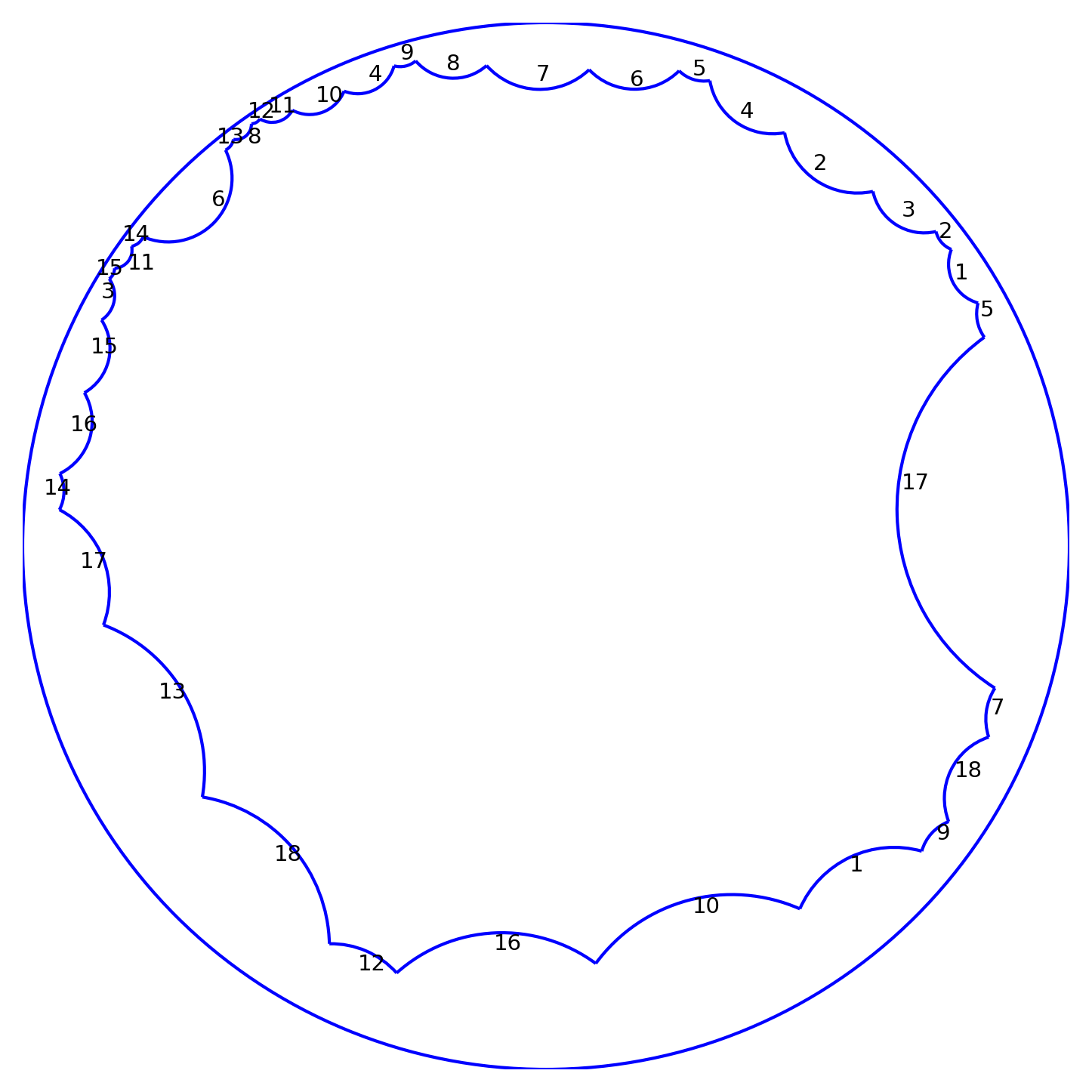}
\caption{A fundamental domain for $C^5$ in the disk model. Edges that correspond to the same number are glued together in the surface.}
\label{fundaDomain}
\end{figure}

In all our examples, a minimal filling set $C'\subset C$ determines a fundamental domain $\mathcal{F}(C')$ consisting of a polygon obtained by cutting $S_{g}$ along the geodesics in $C'$. Filling sets of curves with this property were studied in \cite{minimalfilling}. Figure \ref{fundaDomain} shows such a fundamental domain for the genus 5 example, obtained by cutting $S_{5}$ along the systoles shown in Figure \ref{8systoles}.  To obtain the hyperbolic surface from the fundamental domain, pairs of edges are glued together to form a single edge in the surface. Figure \ref{fundaDomain} shows the pairing of the edges of the fundamental domain, which should not be confused with the counter clockwise labels of (single) edges.

\begin{figure}[ht]
\centering
\includegraphics[width=\textwidth]{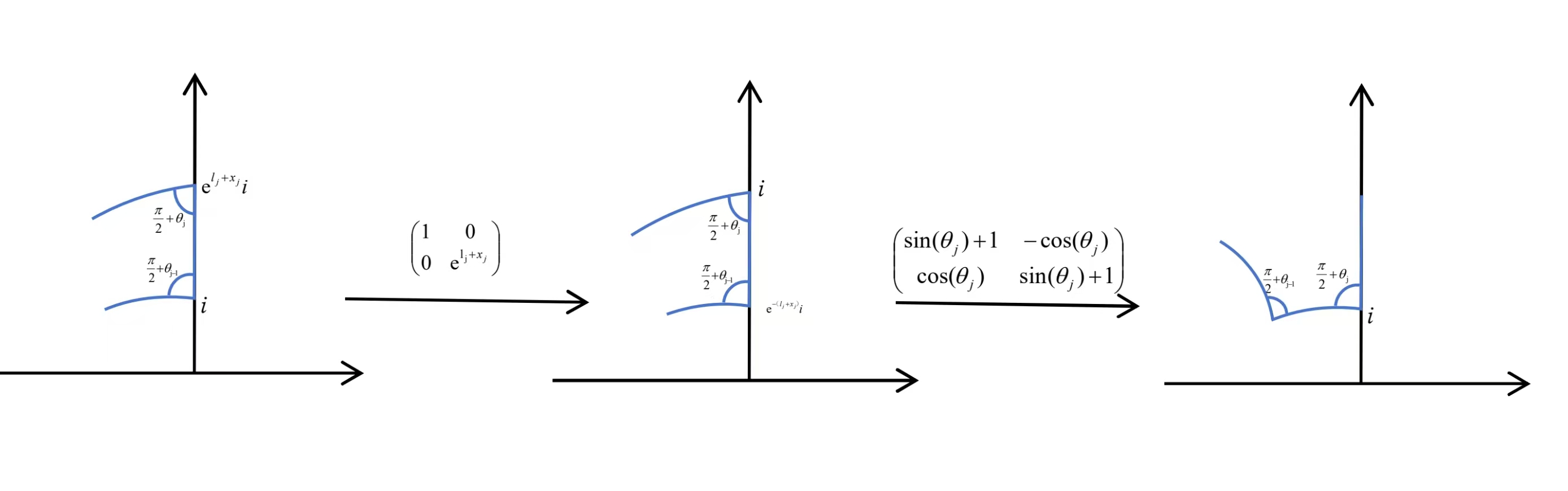}
\caption{$M_j$ is a composition of scaling and rotation.}
\label{M}
\end{figure}
To calculate the index of critical point $x$ in our family of examples, a set of coordinates for $\mathcal{T}_{g}$ on a neighbourhood of $x$ was chosen, consisting of lengths of edges of $\mathcal{F}(C')$ and angles at the vertices of $\mathcal{F}(C')$. These parameters are subject to a number of constraints. Firstly, edges of $\mathcal{F}(C')$ that are glued together have the same length. To derive the second set of constraints, let $n$ be the number of edges of $\mathcal{F}(C')$. Embed $\mathcal{F}(C')$ in the upper half plane model in such a way that the endpoint of edge 1 is at the complex number $i$ and the first edge is above $i$ on the imaginary axis. The edges are assumed to be labelled in such a way that edge $j+1$ follows edge $j$ as the edges are traversed counter clockwise. The angle $\frac{\pi}{2}+\theta_{j}$ is the angle at the vertex of $\mathcal{F}(C')$ between edge $j$ and $j+1$. The edge $j$ determines the matrix $M_{j}\in PSL(2,\mathbb{R})$ obtained by scaling the following

\begin{equation}
\begin{pmatrix}
\sin(\theta_{j})+1 & -e^{l_{j}+x_{j}}\cos(\theta_{j})\\
\cos(\theta_{j})&e^{l_{j}+x_{j}}(\sin(\theta_{j})+1)
\end{pmatrix}
\end{equation}

The transformation $M_j$ is shown as figure \ref{M}. The matrix $M_j$ represents a translation through distance $l_{j}$ followed by a rotation through angle $\frac{\pi}{2}+\theta_{i}$. Here $l_{i}$ is the length of edge $i$ at the critical point $x$, $x_{i}$ is equal to the parameter $y_{j}$ that measures the change in length of the $j$th pair of edges, where the $j$th pair contains the $i$th edge. This gives the constraint
\begin{equation}
\label{polygoneq}
M_{4M(m)}\ldots M_{1}=I~in~PSL(2,\mathbb{R})
\end{equation}

These constraints make it possible to eliminate dependent angle and length parameters. For example, if our minimal filling set $ C' $ has exactly $ M(m) $ intersection points (as is the case for every example we computed), then $ \mathcal{F}(C') $ will contain $ 4M(m) $ edges, forming $ 2M(m) $ pairs of edges. From equation \eqref{polygoneq}, we can eliminate three length parameters, leaving $ 2M(m) - 3 $ length parameters and $ M(m) $ angle parameters. This results in a total of $ 3M(m) - 3 $ independent parameters, which exactly equals $ 6g - 6 $.

To calculate the length of a geodesic $c$ in $C\setminus C'$, the lengths of the arcs obtained as the intersection of $c$ with $\mathcal{F}(C')$ are calculated. This is done by using constrained optimisation to calculate the length of the geodesic intersecting the same edges of $\mathcal{F}(C')$ in the same order as $c$.

For any $ c \in C $, we can numerically compute $ \frac{\partial L(c)}{\partial y_i} $ and $ \frac{\partial L(c)}{\partial \theta_j} $ for some independent parameters $y_i$ and $\theta_j$.  The number of linearly independent differentials $\{dL(c)(x)\ |\ c\in C\}$ then gives the index of the critical point $x$.

\subsection{Questions and future work}
\label{future}
The main motivation for the computations in this paper is in the search for combinatorial criteria to decide if/where level sets of length functions on $\mathcal{T}_{g}$ intersect transversely. Having such criteria would be extremely useful because the major source of difficulties in studying the geometry and topology of $\mathcal{P}_{g}$ is that transversality might break down in unpredictable ways. Some initial progress was made in \cite{Realisable} in deciding what nonfilling sets of curves can be realised as systoles, but for general sets of curves this is a difficult problem without more combinatorial criteria for characterising where transversality breaks down.

As one can see from Table 1, the number of geodesics in the set of systoles in our family of critical points representing independent homology classes gives a lower bound on the index. This leads to the following question:

\begin{ques}
Suppose $C$ is a set of simple, nonseparating, oriented closed geodesics on $S_{g}$, representing independent homology classes in $H_{1}(S_{g}; \mathbb{Q})$. Away from minima of $C$ in $\mathcal{T}_{g}$, are the gradients $\{\nabla L(c)\ |\ c\in C\}$ linearly independent?
\end{ques}

Minima of $C$ are defined on page 14 of \cite{MorseSmale}. Informally, the question asks if the assumptions on $C$ are sufficient to ensure that $\{\nabla L(c)\ |\ c\in C\}$ are as linearly independent as possible. It might help to additionally assume that cutting $S_{g}$ along the geodesics in $C$ does not give any triangles. Ruling out triangular complementary regions is a simplifying assumption that ensures the geodesics do not undergo any type III Reidemeister moves along paths in $\mathcal{T}_{g}$. It is not understood what might happen to the dimension of the span of $\{\nabla L(c)\ |\ c\in C\}$ at such points.

More generally,
\begin{ques}
If $C$ is a set of geodesics that determine a regular tessellation of $S_{g}$ by $m$-gons with $m\geq 5$, are $\{\nabla L(c)\ |\ c\in C\}$ linearly independent on $\mathcal{T}_{g}$ away from minima of $C$?
\end{ques}

\printbibliography

@article {Realisable,
    AUTHOR = {B. Sanki and S. Gadgil},
     TITLE = {Graphs of systoles on hyperbolic surfaces},
  JOURNAL = {Journal of Topology and Analysis},
    VOLUME = {11},
      YEAR = {2019},
    NUMBER = {1},
     PAGES = {1--20},
      ISSN = {1793-5253,1793-7167},
   MRCLASS = {57M50 (05C10 57M15)},
  MRNUMBER = {3918059},
MRREVIEWER = {Elena\ Pavelescu},
       DOI = {10.1142/S1793525319500018},
       URL = {https://doi.org/10.1142/S1793525319500018},
}

@article {minimalfilling,
    AUTHOR = {B. Sanki},
     TITLE = {Filling of closed surfaces},
  JOURNAL = {Journal of Topology and Analysis},
    VOLUME = {10},
      YEAR = {2018},
    NUMBER = {4},
     PAGES = {897--913},
      ISSN = {1793-5253,1793-7167},
   MRCLASS = {57N05},
  MRNUMBER = {3881044},
MRREVIEWER = {Mehmetcik\ Pamuk},
       DOI = {10.1142/S1793525318500309},
       URL = {https://doi.org/10.1142/S1793525318500309},
}

@misc{Coxeter,
      title={Small Systole Sets and Coxeter Groups}, 
      author={I. Irmer and O. Mathieu},
      year={2023},
      howpublished={{T}o appear in ``{A}nnales de l'Institute {F}ourier''},
}

@article {MorseSmale,
    AUTHOR = {I. Irmer},
     TITLE = {The {M}orse-{S}male property of the {T}hurston spine},
   JOURNAL = {SIGMA Symmetry, Integrability and Geometry. Methods and
              Applications},
     VOLUME = {21},
      YEAR = {2025},
     PAGES = {Paper No. 027, 32},
      ISSN = {1815-0659},
   MRCLASS = {57K20 (57M50 57R99)},
  MRNUMBER = {4896172},
MRREVIEWER = {Athanase\ Papadopoulos},
       DOI = {10.3842/SIGMA.2025.027},
       URL = {https://doi.org/10.3842/SIGMA.2025.027},
}

@misc{Fillingsystems,
author = {S. Parsad and B. Sanki},
title = {Filling systems on surfaces},
year={2017},
howpublished = {{T}o appear in Journal of Topology and Analysis}
}

@article{Klein,
  title={{\"U}ber die {T}ransformation siebenter {O}rdnung der elliptischen {F}unktionen},
  author={F. Klein},
  journal={Mathematische Annalen},
  year={1878},
  volume={14},
  pages={428-471},
}

@article{Bolza,
 ISSN = {00029327, 10806377},
 URL = {http://www.jstor.org/stable/2369402},
 author = {O. Bolza},
 journal = {American Journal of Mathematics},
 number = {1},
 pages = {47--70},
 publisher = {Johns Hopkins University Press},
 title = {On Binary Sextics with Linear Transformations into Themselves},
 urldate = {2024-12-18},
 volume = {10},
 year = {1887}
}

@article {CR,
    AUTHOR = {U. Hamenst\"adt and R. Koch},
     TITLE = {Systoles of a family of triangle surfaces},
   JOURNAL = {Experimental Mathematics},
    VOLUME = {11},
      YEAR = {2002},
    NUMBER = {2},
     PAGES = {249--270},
      ISSN = {1058-6458,1944-950X},
   MRCLASS = {53C22 (51F15)},
  MRNUMBER = {1959267},
MRREVIEWER = {Athanase\ Papadopoulos},
       URL = {http://projecteuclid.org/euclid.em/1062621219},
}

@article{YG,
title = {An upper bound for the number of critical points of the systole function on the moduli space of hyperbolic surfaces},
journal = {Topology and its Applications},
volume = {358},
pages = {109091},
year = {2024},
issn = {0166-8641},
doi = {https://doi.org/10.1016/j.topol.2024.109091},
url = {https://www.sciencedirect.com/science/article/pii/S0166864124002761},
author = {Y. Gao},
}

@book{KS,
 ISBN = {9780691081908},
 URL = {http://www.jstor.org/stable/j.ctt1b9s024},
  author = {R. Kirby and L. Siebenmann},
 publisher = {Princeton University Press},
 title = {Foundational Essays on Topological Manifolds, Smoothings, and Triangulations. (AM-88)},
 urldate = {2024-10-25},
 year = {1977}
}

@book{Ratcliffe,
author={J. Ratcliffe},
publisher={Springer},
title={Foundations of Hyperbolic Manifolds, Second Edition},
year={2006},
}

@article {M,
    AUTHOR = {M. Mirzakhani},
     TITLE = {Growth of the number of simple closed geodesics on hyperbolic
              surfaces},
   JOURNAL = {Annals of Mathematics. Second Series},
    VOLUME = {168},
      YEAR = {2008},
    NUMBER = {1},
     PAGES = {97--125},
      ISSN = {0003-486X,1939-8980},
   MRCLASS = {32G15},
  MRNUMBER = {2415399},
MRREVIEWER = {Hsian-Hua\ Tseng},
       DOI = {10.4007/annals.2008.168.97},
       URL = {https://doi.org/10.4007/annals.2008.168.97},
}

@misc{estimating,
      title={Estimating the dimension of the {T}hurston spine}, 
      author={O. Mathieu},
      year={2023},
      howpublished={arXiv:2310.15618},
     }

@misc{Me2023,
title={Thurston's deformation retraction of {T}eichm\"uller space},
author={I. Irmer},
year={2023},
howpublished={{T}o appear in ``In the {T}radition of {T}hurston {IV}''}
}

@incollection {WolpertHessian,
    AUTHOR = {S. Wolpert},
     TITLE = {Convexity of geodesic-length functions: a reprise},
 BOOKTITLE = {Spaces of {K}leinian groups},
    SERIES = {London Math. Soc. Lecture Note Ser.},
    VOLUME = {329},
     PAGES = {233--245},
 PUBLISHER = {Cambridge Univ. Press, Cambridge},
      YEAR = {2006},
   MRCLASS = {32G15 (30F60 37D40 53C22)},
  MRNUMBER = {2258752},
MRREVIEWER = {Athanase Papadopoulos},
}

@article{Maxime,
author = {M. Fortier Bourque},
title={Hyperbolic surfaces with sublinearly many systoles that fill},
journal = {Commentarii Mathematici Helvetici},
volume={95},
year={2020},
pages={515-534}
}

@misc{Thurston,
author = {W. Thurston},
year = {1985},
title = {A spine for {T}eichm\"uller space},
HOWPUBLISHED={Preprint}
}

@article{SchmutzMaxima,
    AUTHOR = {P. Schmutz Schaller},
     TITLE = {Riemann surfaces with shortest geodesic of maximal length}, 
JOURNAL = {Geometric and Functional Analysis},
    VOLUME = {3},
      YEAR = {1993},
    NUMBER = {6},
     PAGES = {564--631},
      ISSN = {1016-443X},
   MRCLASS = {30F99 (53A35 57N05)},
  MRNUMBER = {1250756},
MRREVIEWER = {Christophe Bavard},
       DOI = {10.1007/BF01896258},
       URL = {https://doi.org/10.1007/BF01896258},
}

@article{Bers,
AUTHOR = {L. Bers},
TITLE = {{N}ielsen extensions of {R}iemann surfaces},
JOURNAL = {{A}nnales {A}cademi\ae \ {S}cientiarum {F}ennic\ae, {S}eries {A}. {I}. {M}athematica},
VOLUME = {2},
YEAR = {1976},
PAGES = {29-34}
}

@article{Akrout,
AUTHOR = {Akrout, H.},
     TITLE = {Singularit\'{e}s topologiques des systoles
g\'{e}n\'{e}ralis\'{e}es}, JOURNAL = {Topology},
  FJOURNAL = {Topology. An International Journal of Mathematics},
    VOLUME = {42},
      YEAR = {2003},
    NUMBER = {2},
     PAGES = {291--308},
      ISSN = {0040-9383},
   MRCLASS = {32G15 (57R70)},
  MRNUMBER = {1941437},
MRREVIEWER = {Yoichi Imayoshi},
       DOI = {10.1016/S0040-9383(01)00024-6},
       URL = {https://doi.org/10.1016/S0040-9383(01)00024-6},
}

@article{Morse,
AUTHOR = {M. Morse},
     TITLE = {Topologically non-degenerate functions on a compact
              {$n$}-manifold {$M$}},
     JOURNAL = {Journal d'Analyse Math\'{e}matique},
    VOLUME = {7},
      YEAR = {1959},
     PAGES = {189--208},
      ISSN = {0021-7670},
   MRCLASS = {57.00 (55.00)},
  MRNUMBER = {113233},
MRREVIEWER = {S. Smale},
       DOI = {10.1007/BF02787685},
       URL = {https://doi.org/10.1007/BF02787685},
}

@incollection {SchmutzExtremal,
    AUTHOR = {P. Schmutz Schaller},
     TITLE = {Extremal {R}iemann surfaces with a large number of systoles},
 BOOKTITLE = {Extremal {R}iemann surfaces ({S}an {F}rancisco, {CA}, 1995)},
    SERIES = {Contemp. Math.},
    VOLUME = {201},
     PAGES = {9--19},
 PUBLISHER = {Amer. Math. Soc., Providence, RI},
      YEAR = {1997},
      ISBN = {0-8218-0514-2},
   MRCLASS = {11H55 (11F06 30F45)},
  MRNUMBER = {1429190},
MRREVIEWER = {Thomas\ A.\ Schmidt},
       DOI = {10.1090/conm/201/02617},
       URL = {https://doi.org/10.1090/conm/201/02617},
}

@article {SchmutzMorse,
    AUTHOR = {P. Schmutz Schaller},
     TITLE = {Systoles and topological {M}orse functions for {R}iemann
              surfaces},
   JOURNAL = {Journal of Differential Geometry},
    VOLUME = {52},
      YEAR = {1999},
    NUMBER = {3},
     PAGES = {407--452},
      ISSN = {0022-040X},
   MRCLASS = {32G15 (30F60 57M50)},
  MRNUMBER = {1761080},
MRREVIEWER = {Darryl McCullough},
       URL = {http://projecteuclid.org/euclid.jdg/1214425347},
}

@article {Kiss,
    AUTHOR = {C. D\'oria and E. Freire and P. Murillo
              },
     TITLE = {Hyperbolic manifolds with a large number of systoles},
     JOURNAL = {Transactions of the American Mathematical Society},
    VOLUME = {377},
      YEAR = {2024},
    NUMBER = {2},
     PAGES = {1247--1271},
      ISSN = {0002-9947,1088-6850},
   MRCLASS = {53C22 (11F06)},
  MRNUMBER = {4688548},
MRREVIEWER = {Lina\ Chen},
       DOI = {10.1090/tran/9049},
       URL = {https://doi.org/10.1090/tran/9049},
}

@article {Modular,
    AUTHOR = {P. Clark and J. Voight},
     TITLE = {Algebraic curves uniformized by congruence subgroups of
              triangle groups},
  
  JOURNAL = {Transactions of the American Mathematical Society},
    VOLUME = {371},
      YEAR = {2019},
    NUMBER = {1},
     PAGES = {33--82},
      ISSN = {0002-9947,1088-6850},
   MRCLASS = {11F06 (14G35)},
  MRNUMBER = {3885137},
MRREVIEWER = {Stefan\ K\"uhnlein},
       DOI = {10.1090/tran/7139},
       URL = {https://doi.org/10.1090/tran/7139},
}

@article {Number,
    AUTHOR = {M. Schein and A. Shoan},
     TITLE = {Systolic length of triangular modular curves},
    JOURNAL = {Journal of Number Theory},
    VOLUME = {239},
      YEAR = {2022},
     PAGES = {462--488},
      ISSN = {0022-314X,1096-1658},
   MRCLASS = {53C22 (11R52 30F35)},
  MRNUMBER = {4434503},
MRREVIEWER = {Cherng-tiao\ Perng},
       DOI = {10.1016/j.jnt.2021.11.010},
       URL = {https://doi.org/10.1016/j.jnt.2021.11.010},
}

@misc{Calculation,
author={I. Irmer},
HOWPUBLISHED={To appear},
title={An explicit deformation retraction of the genus 2 moduli space},
year={2024}
}

\end{document}